\theoremstyle{plain}
\newtheorem*{theorem}{Theorem}
\theoremstyle{definition}
\newtheorem*{definition}{Definition}
\newcommand*{\kc}{\mathrm K}
\newcommand*{\ku}[1]{#1^\kc}
\newcommand*{\kd}[1]{#1_\kc}
\newcommand*{\kvc}[1]{\mathrm K_#1}
\newcommand*{\kvu}[2]{#2^{\kvc#1}}
\newcommand*{\kvd}[2]{#2_{\kvc#1}}
\newcommand*{\tvc}[1]{\mathrm T_#1}
\newcommand*{\tvu}[2]{#2^{\tvc#1}}
\newcommand{\cl}{\mathsf{CL}}
\newcommand{\clp}{\mathsf{CL}'}
\newcommand{\il}{\mathsf{IL}}
\newcommand{\ml}{\mathsf{ML}}
\newcommand*{\defequiv}{\mathrel{\mathop:}\equiv}
\newcommand*{\Forall}[1]{\forall #1 \,}
\newcommand*{\Exists}[1]{\exists #1 \,}
\begin{document}

\title{Variants into minimal logic of the\\Kuroda negative translation}
\author{Jaime Gaspar\thanks{INRIA Paris-Rocquencourt, $\pi r^2$, Univ Paris Diderot, Sorbonne Paris Cit\'e, F-78153 Le Chesnay, France. \texttt{mail@jaimegaspar.com}, \texttt{www.jaimegaspar.com}. Financially supported by the French Fondation Sciences Math\'ematiques de Paris.}}
\date{11 April 2013}
\maketitle

\begin{abstract}
  The Kuroda negative translation translates classical logic only into intuitionistic logic, not into minimal logic. We present eight variants of the Kuroda negative translation that translate classical logic even into minimal logic. The proofs of their soundness theorems are interesting because they illustrate four different methods of proof.
\end{abstract}

\begin{definition} Let $P$ range through the atomic formulas.
  \begin{enumerate}
    \item The \emph{Kuroda negative translation}~$\kc$~\cite[page~46]{Kuroda1951} translates each formula $A$ to the formula $\ku A \defequiv \neg\neg\kd A$ where $\kd A$ is defined by recursion on the length of $A$ by
    \begin{align*}
      \kd P &\defequiv P, &
      \kd{(A \to B)} &\defequiv \kd A \to \kd B, \\
      \kd{(A \wedge B)} &\defequiv \kd A \wedge \kd B, &
      \kd{(\Forall x A)} &\defequiv \Forall x \neg\neg\kd A, \\
      \kd{(A \vee B)} &\defequiv \kd A \vee \kd B, &
      \kd{(\Exists x A)} &\defequiv \Exists x \kd A.
    \end{align*}

    \item The variants $\kvc1$, $\kvc2$~\cite[page~21]{Avigad2010}, $\kvc3$, $\kvc4$, $\kvc5$~\cite[page~229]{FerreiraOliva2012}, $\kvc6$~\cite[section~6.3]{FerreiraOliva2011}, $\kvc7$ and $\kvc8$ of $\kc$ are defined analogously to $\kc$ except for
    \begin{align*}
      \kvd1 P &\defequiv P \vee \bot, &
      \kvd5{(A \to B)} &\defequiv \neg \kvd5 A \vee \kvd5 B, \\
      \kvd2 P &\defequiv \neg\neg P, &
      \kvd6{(A \to B)} &\defequiv \kvd6 A \to \neg\neg\kvd6 B, \\
      \kvd3 P &\defequiv (\bot \to P) \to P, &
      \kvd7{(A \to B)} &\defequiv \neg\kvd7 B \to \neg\kvd7 A, \\
      \kvd4{(A \to B)} &\defequiv \kvd4 A \to \kvd4 B \vee \bot, &
      \kvd8{(A \to B)} &\defequiv \neg(\kvd8 A \wedge \neg\kvd8 B).
    \end{align*}
  \end{enumerate}
\end{definition}

\begin{theorem}[soundness and characterisation] For $\kvc1$, $\kvc2$, $\kvc3$, $\kvc4$, $\kvc5$~\cite[page~229]{FerreiraOliva2012}, $\kvc6$~\cite[section~6.3]{FerreiraOliva2011}, $\kvc7$ and $\kvc8$ we have:
  \begin{enumerate}
    \item $\cl + \Gamma \vdash A \ \Rightarrow \ \ml + \kvu i \Gamma \vdash \kvu i A$;
    \item $\cl \vdash A \leftrightarrow \kvu i A$.
  \end{enumerate}
\end{theorem}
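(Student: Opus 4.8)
The plan is to dispose of the characterisation~(2) first, and then concentrate on the soundness~(1), where essentially all of the work lies.

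\textbf{Characterisation.} For~(2) I would prove the sharper statement $\cl \vdash \kvd{i}{A} \leftrightarrow A$ by induction on~$A$; the Kuroda wrapper then comes for free, since $\cl \vdash \neg\neg B \leftrightarrow B$ yields $\cl \vdash \kvu{i}{A} \leftrightarrow \kvd{i}{A} \leftrightarrow A$. At an unmodified clause the induction hypothesis and congruence suffice (the post-$\forall$ double negations again vanish classically), and at each modified clause one merely records a classical triviality --- $(P\vee\bot)\leftrightarrow P$, $\neg\neg P \leftrightarrow P$, $((\bot\to P)\to P)\leftrightarrow P$, $(\neg\kvd{5}{A}\vee\kvd{5}{B})\leftrightarrow(\kvd{5}{A}\to\kvd{5}{B})$, and similarly for $\kvc4,\kvc6,\kvc7,\kvc8$. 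This part is routine.

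\textbf{Soundness, set-up.} For~(1) I would present $\cl$ as $\ml$ extended by the single schema $\neg\neg A \to A$ of double negation elimination --- over minimal logic this schema already proves $\bot \to A$, so it is all that separates $\cl$ from $\ml$ --- and induct on a derivation of $A$ from $\Gamma$. Two things must be checked: that the $\neg\neg$-translation sends every minimal axiom and rule to something $\ml$-derivable from the translated premisses, and that $\ml \vdash \kvu{i}{(\neg\neg A \to A)}$. The first is the usual negative-translation bookkeeping: only translated formulas ever occur, and the outer and post-$\forall$ double negations furnish the classical-looking steps inside $\ml$. The sole delicate point is that $\kvc4$--$\kvc8$ reshape the implication clause, so the $\to$-rules need re-inspection; e.g.\ for $\kvc5$ one obtains $\neg\neg\kvd{5}{B}$ from $\neg\neg(\neg\kvd{5}{A}\vee\kvd{5}{B})$ and $\neg\neg\kvd{5}{A}$ by assuming $\neg\kvd{5}{B}$, deriving $\neg(\neg\kvd{5}{A}\vee\kvd{5}{B})$ by cases, and discharging the outer negation, with no appeal to ex falso.

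\textbf{Soundness, the crux.} The real obstacle is the second point, and it is exactly where the plain $\kc$ breaks down over $\ml$: writing $C \defequiv \kvd{i}{A}$ and simplifying $\kvd{i}{(\neg\neg A)}$ in minimal logic, the translated axiom reduces to the shape $\neg\neg(\neg\neg C \to C)$, whose naive proof stalls at the missing instance $\bot \to C$ of ex falso. Each variant mends precisely this instance, and the differing mends are the four methods. For the atom variants $\kvc1,\kvc2,\kvc3$ I would prove the key lemma $\ml \vdash \bot \to \kvd{i}{A}$ for all $A$ by a separate induction, its base cases being the minimal facts $\bot \to (P\vee\bot)$, $\bot \to \neg\neg P$, $\bot \to ((\bot\to P)\to P)$; this restores ex falso on every formula occurring in the main induction, so the standard Kuroda argument runs intact inside $\ml$. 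For the remaining variants I would instead verify the reduced axiom directly: for $\kvc6$ the double-negated conclusion collapses it to the identity $\neg\neg C \to \neg\neg C$; for $\kvc7$ and $\kvc8$ the contrapositive and the negated-conjunction forms turn the implication into a negative formula, so the axiom becomes the minimal tautologies $\neg C \to \neg\neg\neg C$ and $\neg(\neg\neg C \wedge \neg C)$; while for $\kvc4$ and $\kvc5$ the disjunctive conclusion lets the outer double negation finish the job, the axiom collapsing to the minimally provable $\neg\neg(\neg\neg C \to (C\vee\bot))$ and to the double-negated excluded middle $\neg\neg(\neg C \vee C)$. I expect the genuine effort to lie in these minimal-logic simplifications of $\kvd{i}{(\neg\neg A)}$ together with the key lemma, the rest being mechanical.
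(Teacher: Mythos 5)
Your argument is correct, but it is organised quite differently from the paper's. You run a single uniform induction over $\ml$ plus the stability schema $\neg\neg A \to A$, isolating the translated stability axiom as the only genuinely problematic node and discharging it either by the key lemma $\ml \vdash \bot \to \kvd{i}{A}$ (for $\kvc1$--$\kvc3$, whose atomic clauses are designed exactly to make that lemma's base case go through) or by a direct minimal-logic computation of $\kvd{i}{(\neg\neg A \to A)}$ (for $\kvc4$--$\kvc8$); I checked the reductions you state --- $\neg\neg C \to \neg\neg C$ for $\kvc6$, $\neg C \to \neg\neg\neg C$ for $\kvc7$, $\neg(\neg\neg C \wedge \neg C)$ for $\kvc8$, $\neg\neg(\neg\neg C \to C \vee \bot)$ for $\kvc4$, $\neg\neg(\neg C \vee C)$ for $\kvc5$ --- and they are all right, as is your sample verification of modus ponens for $\kvc5$. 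The paper instead deliberately uses four distinct methods: for $\kvc1$--$\kvc4$ it factors the soundness as $\cl \to \il \to \ml$, composing the known Kuroda soundness into $\il$ with Leivant-style translations $\tvc{i}$ satisfying $\ml \vdash \tvu{i}{(\ku A)} \leftrightarrow \kvu{i}{A}$ (your key lemma is, in effect, the reason those translations land in $\ml$); for $\kvc5$ it first translates $\cl$ into a classical calculus based on $\neg$, $\vee$, $\exists$, on which plain Kuroda is already sound into $\ml$; for $\kvc6$ it does a direct induction in G\"odel's system plus excluded middle; and it derives $\kvc7$, $\kvc8$ from $\kvc6$ via $\ml \vdash \kvd{6}{A} \leftrightarrow \kvd{7}{A} \leftrightarrow \kvd{8}{A}$ (a shortcut your uniform treatment forgoes). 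The paper's decomposition reuses known soundness theorems and keeps the per-variant verification small --- and exhibiting four methods is its stated point --- whereas your route is more self-contained and uniform, at the price of re-checking every rule of the base calculus for each of the five reshaped implication clauses; that re-checking does go through, so there is no gap, only more bookkeeping.
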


\begin{proof}
  The characterisation theorems are proved by induction on the length of $A$. Let us prove the soundness theorems.
  \begin{description}
    \item[$\kvc1,\kvc2,\kvc3,\kvc4.\ \,$] Let $\tvc1$~\cite[page~686]{Leivant1985}, $\tvc2$~\cite[page~686]{Leivant1985}, $\tvc3$ and $\tvc4$ be the translations of formulas defined by $\tvu1 P \defequiv P \vee \bot$, $\tvu2 P \defequiv \neg\neg P$, $\tvu3 P \defequiv (\bot \to P) \to P$, $\tvu4 P \defequiv P \vee \bot$, $\tvu4{(A \to B)} \defequiv \tvu4 A \to \tvu4 B \vee \bot$, $\tvc 1$, $\tvc2$ and $\tvc3$ commute with $\wedge$, $\vee$, $\to$, $\forall$ and $\exists$, and $\tvc4$ commutes with $\wedge$, $\vee$, $\forall$ and $\exists$. We can prove $\il + \Gamma \vdash A \ \Rightarrow \ \ml + \tvu i \Gamma \vdash \tvu i A$~\cite[page~686]{Leivant1985} by induction on the length of the proof of $A$, and $\ml \vdash \tvu i {(\ku A)} \leftrightarrow \kvu i A$ by induction on the length of $A$. Then $\cl + \Gamma \vdash A \ \Rightarrow \ \il + \ku\Gamma \vdash \ku A \ \Rightarrow \ \ml + \tvu i {(\ku\Gamma)} \vdash \tvu i {(\ku A)} \ \Rightarrow \ \ml + \kvu i \Gamma \vdash \kvu i A$.

    \item[$\kvc5.\ \,$] Let $\clp$ be $\cl$ based on $\neg$, $\vee$, and $\exists$~\cite[section~2.6]{Shoenfield1967}. As remarked by Benno van den Berg, $\kc$ (extended by $\kd{(\neg A)} \defequiv \neg \kd A$) translates $\clp$ into $\ml$. Let $\tvc5$ be the translation of formulas defined by $\tvu5 \bot \defequiv \neg(\neg C \vee C)$ (where $C$ is a fixed closed formula), $\tvu5 P \defequiv P$ (for $P \not\equiv \bot$), $\tvu5{(A \wedge B)} \defequiv \neg(\neg\tvu5 A \vee \neg\tvu5 B)$, $\tvu5{(A \to B)} \defequiv \neg\tvu5 A \vee \tvu5 B$, $\tvu5{(\Forall x A)} \defequiv \neg\Exists x\neg\tvu5 A$ and $\tvc5$ commutes with $\vee$ and $\exists$. We can prove $\cl + \Gamma \vdash A \ \Rightarrow \ \clp + \tvu5\Gamma \vdash \tvu5 A$ by induction on the length of the proof of $A$, and $\ml \vdash \ku{(\tvu5 A)} \leftrightarrow \kvu5 A$ by induction on the length of $A$. Then $\cl + \Gamma \vdash A \ \Rightarrow \ \clp + \tvu5\Gamma \vdash \tvu5 A \ \Rightarrow \ \ml + \ku{(\tvu5\Gamma)} \vdash \ku{(\tvu5 A)} \ \Rightarrow \ \ml + \kvu5\Gamma \vdash \kvu5 A$.

    \item[$\kvc6.\ \,$] The proof is by induction on the length of the proof of $A$ in G\"odel's system~\cite[section~1.1.4]{Troelstra1973} plus the law of excluded middle. The greatest difficulty is the rule $\frac{A \to B}{C \vee A \to C \vee B}$: its translation by $\kvc6$ is $\frac{\neg\neg(\kvd6 A \to \neg\neg\kvd6 B)}{\neg\neg(\kvd6 C \vee \kvd6 A \to \neg\neg(\kvd6 C \vee \kvd6 B))}$; from the premise we get $\kvd6 A \to \neg\neg\kvd6 B$ (by $\ml \vdash \neg\neg(D \to \neg E) \to (D \to \neg E)$), so $\kvd6 C \vee \kvd6 A \to \kvd6 C \vee \neg\neg\kvd6 B$ (by the rule), thus $\kvd6 C \vee \kvd6 A \to \neg\neg(\kvd6 C \vee \kvd6 B)$ (by $\ml \vdash D \vee \neg\neg E \to \neg\neg(D \vee E)$), getting the conclusion (by $\ml \vdash D \to \neg\neg D$).

    \item[$\kvc7,\kvc8.\ \,$] We can prove $\ml \vdash \kvd6 A \leftrightarrow \kvd7 A$ and $\ml \vdash \kvd6 A \leftrightarrow \kvd8 A$ by induction on the length of $A$, so the soundness theorems of $\kvc7$ and $\kvc8$ follow from the soundness theorem of $\kvc6$.\qedhere
  \end{description}
\end{proof}

\bibliography{References}{}
\bibliographystyle{plain}

\end{document}